\newtheorem{problem}{Problem}
\title{The Application of Multi-block ADMM on Isotonic Regression Problems}
\begin{document}

\maketitle
\vspace{-1.5cm}
\begin{abstract}%
The multi-block ADMM has received much attention from optimization researchers due to its excellent scalability. In this paper, the multi-block ADMM is applied to solve two large-scale problems related to isotonic regression. Numerical experiments show that the multi-block ADMM is convergent when the chosen parameter is small enough and the multi-block ADMM scales well compared with baselines. 
\end{abstract}
\vspace{-0.4cm}
\section{Introduction}
\indent In recent years, the Alternating Direction Method of Multipliers (ADMM) has received considerable attention from the community of machine learning researchers. This is because it is a natural fit for wide large-scale data applications, including deep learning \citep{Wang:2019:AED:3292500.3330936}, phase retrieval \citep{wen2012alternating}, vaccine adverse event detection \citep{wang2018multi,wang2018semi} and compressive sensing \citep{chartrand2013nonconvex}. The direct extension of the classic ADMM is multi-block ADMM (i.e. ADMM with no less than three variables), which is written mathematically as follows:
\small
\begin{align*}
    &\min\nolimits_{x_1,\cdots,x_n} \sum\nolimits_{i=1}^n f_i(x_i), \ \ s.t. \sum\nolimits_{i=1}^n A_ix_i=0
    \end{align*}
    \normalsize
where $f_i:\mathbb{R}^{m_i}\rightarrow \mathbb{R}(i=1,\cdots,n)$ are convex functions, $x_i\in \mathbb{R}^{m_i}(i=1,\cdots,n)$ are vectors of length $m_i$. $A_i\in \mathbb{R}^{p\times m_i}(i=1,\cdots,n)$ are matrices. The augmented Lagrangian function  is formulated as $L_\rho(x_1,\cdots,x_n,y)=\sum\nolimits_{i=1}^n f_i(x_i)+(\rho/2)\Vert\sum\nolimits_{i=1}^n A_ix_i+y/\rho\Vert^2_2$ where $y\in \mathbb{R}^p$ is a dual variable and $\rho>0$ is a penalty parameter. The multi-block ADMM is solved by the following steps:
\small
\begin{align*}
    & x_i^{k+1}\leftarrow \arg\min\nolimits_{x_i} L_\rho(\cdots,x^{k+1}_{i-1},x_i,x^k_{i+1},\cdots) (i=1,\cdots,n)\\
    & y^{k+1}\leftarrow y^k+\rho\sum\nolimits_{i=1}^n A_ix^{k+1}_i
\end{align*}
\normalsize
A variety of related works on the convergence of the multi-block ADMM are detailed in Section \ref{sec: related work} in the Appendix.\\
\indent  While the multi-block ADMM can solve problems with linear equality constraints, it cannot directly be applied to the problems with multiple inequality constraints such as  the isotonic regression problem \citep{kyng2015fast}. In this paper, we propose new strategies based on the multi-block ADMM to address existing computational challenges in the  isotonic regression problems. The well-known isotonic regression aims to return a sequence of responses given a predictor and pre-defined order constraints, which has been addressed by many previous works. See \citep{stout2013isotonic,best1990active,barlow1972isotonic,kalai2009isotron,moon2010intervalrank,kyng2015fast} for more information. However, The main drawback is that their computational cost is very expensive when solving large-scale problems. \\
\indent To deal with the challenge of scalability, we leverge the advantage of parallel computing of the multi-block ADMM by integrating objective variables into several vectors. The following questions are addressed for the proposed multi-block ADMM frameworks on two isotonic regression problems: \textit{1. Does the multi-block ADMM converge? 2. Does the multi-block ADMM scale well?}
\vspace{-0.4cm}
\section{Isotonic Regression Problems}
\label{sec:applications}
\vspace{-0.4cm}
\begin{algorithm} 
\tiny
\caption{The Multi-block ADMM Algorithm to Solve Problem \ref{prob: smoothed  isotonic regression}} 
\begin{algorithmic}[1] 
\STATE Initialize $p$, $q$, $u$, $y_1$, $y_2$, $\rho>0$, $k=0$.
\REPEAT
\STATE Update $u^{k+1}$ in Equation \eqref{eq:isotonic update u}.
\STATE Update $p^{k+1}$ in Equation \eqref{eq:isotonic update p}.
\STATE Update $q^{k+1}$ in Equation \eqref{eq:isotonic update q}.
\STATE Update $r_1^{k+1}\leftarrow p^{k+1}-q^{k+1}+u^{k+1}$.
\STATE Update $r_{2,i}^{k+1}\leftarrow p^{k+1}_{i+1}-q^{k+1}_i(i=1,\cdots,n-2), r^{k+1}_2\leftarrow[r^{k+1}_{2,1};\cdots;r^{k+1}_{2,n-2}]$.
\STATE Update $s_1^{k+1}\leftarrow \rho(p^{k+1}-q^{k+1}-p^k+q^k)$.
\STATE Update $s_2^{k+1}\leftarrow \rho(q^k-q^{k+1})$.
\STATE Update $r^{k+1}\leftarrow \sqrt{\Vert r_1^{k+1}\Vert^2_2+\Vert r_2^{k+1}\Vert^2_2}$. $\#$ Calculate the primal residual.
\STATE Update $s^{k+1}\leftarrow \sqrt{\Vert s_1^{k+1}\Vert^2_2+\Vert s_2^{k+1}\Vert^2_2}$. $\#$ Calculate the dual residual.
\STATE Update $y_1^{k+1}\leftarrow y_1^k+\rho r_1^{k+1}$.
\STATE Update $y_2^{k+1}\leftarrow y_2^k+\rho r_2^{k+1}$.
\STATE $\nonumber k\leftarrow k+1$.
\UNTIL convergence.
\STATE Output  $p$, $q$ and $u$.
\end{algorithmic}
\label{algo:algorithm 1}
\end{algorithm}
\vspace{-0.4cm}
\subsection{The Multi-block ADMM for Smoothed Isotonic Regression}
\label{sec:smoothed isotonic regression}
\indent The classic isotonic regression is a problem to return a non-decreasing response given a predictor. However, the fitted response resembles a step function while a response is expected to be smooth and continuous in many applications \citep{sysoev2018smoothed}. To achieve this, Sysoev and Burdakov proposed a smoothed isotonic regression problem to eliminate sharp `jumps' of the
response function given a predictor
$x_i(i=1,\cdots,n)$ \citep{sysoev2018smoothed}:
\begin{problem}[Smoothed Isotonic Regression] \label{prob: smoothed  isotonic regression}
\small
\begin{align*}
&\min\nolimits_{\beta_1,\cdots,\beta_n} \sum\nolimits_{i=1}^n w_i(x_i-\beta_i)^2+\lambda\sum\nolimits_{i=1}^{n-1} (\beta_i-\beta_{i+1})^2\\
& s.t. \ \beta_1\leqslant \beta_2 \leqslant \cdots \leqslant \beta_n
\end{align*}
\end{problem}
\normalsize
where $w_i>0(i=1,\cdots,n)$ are assigned weights, $\beta_i(i=1,\cdots,n)$ are fitted predictors, and $\lambda\geq 0$ is a penalty parameter. The multi-block ADMM is applied to realize parallel computing: we introduce two vectors $p$ and $q$ of length $n-1$ such that $p_i=\beta_i(i=1,\cdots,n-1)$ and $q_i=\beta_{i+1}(i=1,\cdots,n-1)$, respectively. The problem can be reformulated as 
\small
\begin{align*}
\min\nolimits_{p,q,u} w_1(x_1-p_1)^2\!+&\! \sum\nolimits_{i=2}^{n-1} ((w_i/2)(x_i-p_i)^2\!+\!(w_i/2)(x_i-q_{i-1})^2)\!+\!w_n(x_n-q_{n-1})^2\!+\!\lambda \Vert u\Vert^2_2\\
&s.t. \ p-q+u=0,u\geq 0, p_{i+1}=q_i, \ i=1,2,\cdots,n-2
\end{align*}
\normalsize
where $p=[p_1,\cdots,p_{n-1}]$ and $q=[q_1,\cdots,q_{n-1}]$. The augmented Lagrangian is $L_\rho(u,p,q,y_1,y_2)=w_1(x_1-p_1)^2\!+\! \sum\nolimits_{i=2}^{n-1} ((w_i/2)(x_i-p_i)^2\!+\!(w_i/2)(x_i-q_{i-1})^2)\!+\!w_n(x_n-q_{n-1})^2\!+\!\lambda \Vert u\Vert^2_2+ (\rho/2)\Vert p-q+u+y_1/\rho\Vert^2_2+(\rho/2)\sum\nolimits_{i=1}^{n-2} (p_{i+1}-q_i+y_{2,i}/\rho)^2$ where $y_1=[y_{1,1},\cdots,y_{1,n-1}]$, $y_2=[y_{2,1},\cdots,y_{2,n-2}]$ and $\rho>0$. The multi-block ADMM to solve Problem \ref{prob: smoothed  isotonic regression} is shown in Algorithm \ref{algo:algorithm 1}. Each subproblem has a closed-form solution and can be implemented in parallel, which is shown as follows:\\
\textbf{1. Update $u$.}\\
\indent The variable $u$ is updated as follows:
\small
\begin{align}
    u^{k+1}&\leftarrow\arg\min\nolimits_{u} (\rho/2)\Vert p^k-q^k+u+y^k_1/\rho\Vert^2_2+\lambda\Vert u\Vert^2_2, \ s.t. \ u\geq 0.
    \label{eq:isotonic update u}\\\nonumber &\leftarrow\max((\rho(q^k-p^k)-y^k_1)/(\rho+2\lambda),0).
\end{align}
\normalsize
\textbf{2. Update $p$.}\\
\indent The variable $p$ is updated as follows:
\small
\begin{align}
\nonumber
    p_1^{k+1}&\leftarrow\arg\min\nolimits_{p_1} w_1(x_1-p_1)^2+(\rho/2)\Vert p_1-q^k_1+u^{k+1}_1+y^k_1/\rho\Vert^2_2\\\nonumber &\leftarrow(2w_1x_1+\rho q^k_1-\rho u^{k+1}_1-y^k_{1,1})/(2w_1+\rho)\\
    \nonumber
    p_i^{k+1}&\leftarrow\arg\min\nolimits_{p_i} (w_i/2)(x_i-p_i)^2+(\rho/2)\Vert p_i-q^k_i+u^{k+1}_i+y^k_i/\rho\Vert^2_2\\&+(\rho/2)\Vert p_i-q^k_{i-1}+y^k_{2,i-1}/\rho\Vert^2_2(i=2,\cdots,n-1).
    \label{eq:isotonic update p}\nonumber\\&\leftarrow(w_ix_i+\rho q^k_i-\rho u^{k+1}_i-y^k_{1,i}+\rho q^k_{i-1}-y^k_{2,i-1})/(w_i+2\rho)(i=2,\cdots,n-1).
\end{align}
\normalsize
\textbf{3. Update $q$.}\\
\indent The variable $q$ is updated as follows:
\small
\begin{align}
    \nonumber q^{k+1}_i&\leftarrow\arg\min\nolimits_{q_i} (w_{i+1}/2)(x_{i+1}-q_{i})^2+(\rho/2)\Vert p^{k+1}_i-q_i+u^{k+1}_i+y^{k}_{1,i}/\rho\Vert^2_2\\ &+(\rho/2)(p^{k+1}_{i+1}-q_i+y^k_{2,i}/\rho)(i=1,\cdots,n-2).\label{eq:isotonic update q}\\\nonumber&\leftarrow(w_{i+1}x_{i+1}+\rho p^{k+1}_i+\rho u^{k+1}_i+y^k_{1,i}+\rho p^{k+1}_{i+1}+y^k_{2,i})/(w_{i+1}+2\rho)(i=1,\cdots,n-2)\\\nonumber
    q^{k+1}_{n-1}&\leftarrow\arg\min\nolimits_{q_{n-1}} w_n(x_n-q_{n-1})^2+(\rho/2)\Vert p^{k+1}_{n-1}-q_{n-1}+u^{k+1}_{n-1}+y^{k}_{1,n-1}/\rho\Vert^2_2.\\\nonumber
    &\leftarrow(2w_n x_n+\rho p^{k+1}_{n-1}+\rho u^{k+1}_{n-1}+y^k_{1,n-1})/(2w_n+\rho).
\end{align}
\indent Due to space limit, the convergence of Algorithm \ref{algo:algorithm 1} is discussed in Section \ref{sec:convergence} in the Appendix.
\subsection{The Multi-block ADMM for Multi-dimensional Ordering}
\label{sec:Multi-dimensional Ordering}
\indent The previous smoothed isotonic regression only considers linear orders, while multi-dimensional orders are  more general in isotonic regression applications \citep{stout2015isotonic}.  A multi-dimensional order is defined in a $m-$dimensional space $Z_i=(z_{i,1},\cdots,z_{i,m})$. $Z_i\leq Z_j$ if and only if $z_{i,1}\leq z_{j,1},\cdots,z_{i,m}\leq z_{j,m}$.  It can be represented equivalently as Directed Acyclic Graph (DAG) $G=(V,E)$  where $(Z_i,Z_j)\in E$ if $Z_i\leq Z_j$ \citep{stout2015isotonic}. Formally, the multi-dimensional ordering problem is formulated as follows:
\begin{problem}[Multi-dimensional Ordering]
\label{prob: Multi-dimensional ordering}
\small
\begin{align*}
&\min\nolimits_{\alpha_1,\cdots,\alpha_n}\sum\nolimits_{i=1}^n w_i(Y_i-\alpha_i)^2\\
&s.t. \ \alpha_i\leq \alpha_j \ \text{iff} \ Z_i\leq Z_j (1\leq i,j\leq n).
\end{align*}
\end{problem}
\normalsize
where $w_i>0(i=1,\cdots,n)$ are assigned weights, $Y_i(i=1,\cdots,n)$ are predictors, and $\alpha_i(i=1,\cdots,n)$ are fitted predictors. \\
\indent To handle large-scale multi-dimensional ordering problems, we leverage the advantage of parallel computing of the multi-block ADMM to solve it in an exact form. By introducing two vectors $g$ and $h$, this problem is equivalent of
\small
\begin{align*}
    &\min\nolimits_{g,h}  W^T((Y-g)\odot(Y-g))/2+W^T((Y-h)\odot(Y-h))/2\\
    & s.t. \ E_1 g-E_2 h+v=0, \ v\geq 0,\ g=h.
\end{align*}
\normalsize
where $W=(w_1,\cdots,w_n)$,  $Y=(Y_1,\cdots,Y_n)$ and $\odot$ is the Hadamard product. $E_1\in R^{\vert E\vert\times n}$ and $E_2\in R^{\vert E\vert\times n}$ are representations of the edge set $E$: the $k$-th edge $(i,j)\in E$ means that $E_{1,k,i}=1$ and $E_{2,k,j}=1$ while $E_{1,k,p}=0(1\leq p\leq n,p\neq i)$ and $E_{2,k,q}=0(1\leq q\leq n,q\neq j)$.
The augmented Lagrangian is $L_\rho(v,g,h,y_1,y_2)=W^T((Y-g)\odot(Y-g))/2+W^T((Y-h)\odot(Y-h))/2+(\rho/2)\Vert E_1 g-E_2 h+v+y_1/\rho\Vert^2_2+(\rho/2)\Vert g-h+y_2/\rho\Vert^2_2.$ where $\rho>0$. The multi-block ADMM to solve Problem \ref{prob: Multi-dimensional ordering} is shown in Algorithm \ref{algo:algorithm 2}. Each subproblem has a closed-form solution and can be implemented in parallel, which is shown as follows:\\
\begin{algorithm}
\tiny
\caption{The Multi-block ADMM Algorithm to Solve Problem \ref{prob: Multi-dimensional ordering}} 
\begin{algorithmic}[1] 
\STATE Initialize $g$, $h$, $v$, $y_1$, $y_2$, $\rho>0$, $k=0$.
\REPEAT
\STATE Update $v^{k+1}$ in Equation \eqref{eq:Multi-dimensional update v}.
\STATE Update $g^{k+1}$ in Equation \eqref{eq:Multi-dimensional update g}.
\STATE Update $h^{k+1}$ in Equation \eqref{eq:Multi-dimensional update h}.
\STATE Update $r_1^{k+1}\leftarrow E_1 g^{k+1}-E_2h^{k+1}+v^{k+1}$.
\STATE Update $r_2^{k+1}\leftarrow g^{k+1}-h^{k+1}$.
\STATE Update $s_1^{k+1}\leftarrow \rho(E_1g^{k+1}-E_2h^{k+1}-E_1g^k+E_2h^{k})$.
\STATE Update $s_2^{k+1}\leftarrow \rho E_2(h^k-h^{k+1})$.
\STATE Update $s^{k+1}_3\leftarrow \rho(h^k-h^{k+1})$.
\STATE Update $r^{k+1}\leftarrow \sqrt{\Vert r_1^{k+1}\Vert^2_2+\Vert r_2^{k+1}\Vert^2_2}$. $\#$ Calculate the primal residual.
\STATE Update $s^{k+1}\leftarrow \sqrt{\Vert s_1^{k+1}\Vert^2_2+\Vert s_2^{k+1}\Vert^2_2+\Vert s_3^{k+1}\Vert^2_2}$. $\#$ Calculate the dual residual.
\STATE Update $y_1^{k+1}\leftarrow y_1^k+\rho r_1^{k+1}$.
\STATE Update $y_2^{k+1}\leftarrow y_2^k+\rho r_2^{k+1}$.
\STATE $\nonumber k\leftarrow k+1$.
\UNTIL convergence.
\STATE Output  $g$, $h$ and $v$.
\end{algorithmic}
\label{algo:algorithm 2}
\end{algorithm}
\textbf{1. Update $v$.}\\
\indent The variable $v$ is updated as follows:
\small
\begin{align}
    v^{k+1}&\leftarrow\arg\min\nolimits_{v} (\rho/2)\Vert E_1 g^k-E_2 h^k+v+y^k_1/\rho\Vert^2_2, \ s.t. \ v\geq 0.
    \label{eq:Multi-dimensional update v}\\\nonumber & \leftarrow\max(-E_1g^k+E_2h^k-y_1^k/\rho,0).
\end{align}
\normalsize
\textbf{2. Update $g$.}\\
\indent The variable $g$ is updated as follows:
\small
\begin{align}
\nonumber g^{k+1}&\leftarrow\arg\min\nolimits_{g} W^T((Y-g)\odot(Y-g))/2+(\rho/2)\Vert E_1g-E_2h^k+v^{k+1}+y^k_1/\rho\Vert^2_2\\&+(\rho/2)\Vert g-h^k+y^k_2/\rho\Vert^2_2.\nonumber\\&\leftarrow (\text{diag}(W)+\rho E^T_1E_1+\rho I)^{-1}(\text{diag}(W)Y+\rho E^T_1E_2h^k-\rho E^T_1v^{k+1}-E^T_1y^k_1+\rho h^k-y^k_2)
    \label{eq:Multi-dimensional update g}
\end{align}
\normalsize
where $I$ is an identity matrix and diag($W$) is a diagonal matrix where the  main diagonal is $W$.\\
\textbf{3. Update $h$.}\\
\indent The variable $h$ is updated as follows:
\small
\begin{align}
\nonumber h^{k+1}&\leftarrow\arg\min\nolimits_{h} W^T((Y-h)\odot(Y-h))/2+(\rho/2)\Vert E_1g^{k+1}-E_2h+v^{k+1}+y^k_1/\rho\Vert^2_2\\&+(\rho/2)\Vert g^{k+1}-h+y^k_2/\rho\Vert^2_2.   \label{eq:Multi-dimensional update h}
\\\nonumber&\leftarrow(\text{diag}(W)+\rho E^T_2E_2+\rho I)^{-1}(\text{diag}(Y)+\rho E^T_2 E_1 g^{k+1}+\rho E^T_2 v^{k+1}+E^T_2 y^k_1+\rho g^{k+1}+y_2^k).
\end{align}
\normalsize
\indent Due to space limit, the convergence of Algorithm \ref{algo:algorithm 2} is discussed in Section \ref{sec:convergence} in the Appendix.
\vspace{-0.4cm}
\section{Experiment}
\label{sec:experiment}
\indent In this section, we validate the multi-block ADMM using simulated datasets and compare it with existing state-of-the-art methods.  All experiments were conducted on a 64-bit machine with Intel(R) core(TM)processor (i7-6820HQ CPU@ 2.70GHZ) and 16.0GB memory.
\vspace{-0.3cm}
\subsection{Data Generation and Parameter Settings}
Due to space limit, data and parameters are detailed in Section \ref{sec:data generation} in the Appendix.
\vspace{-0.3cm}
\subsection{Baselines}
\indent Two methods Smoothed Pool-Adjacent-Violators (SPAV) \citep{sysoev2018smoothed} and Interior Point Method (IPM) \citep{kyng2015fast} are used for comparison. The details can be found in Section \ref{sec: baselines} in the Appendix.
\subsection{Experimental Results}
\indent In this section, the experimental results on two problems are explained in detail. \\
\textbf{1. Does the multi-block ADMM converge?} Figure \ref{fig: convergence} illustrates the convergence properties of the multi-block ADMM when $n=1000$ on the smoothed isotonic regression problem and the multi-dimensional ordering problem. Two choices of $\rho=0.1$ and $\rho=10$ are shown on Figure \ref{fig: convergence}. Overall, Figure \ref{fig: convergence} (a)-(c) shows that the multi-block ADMM converges while Figure \ref{fig: convergence}(d) shows the divergence: $r$ and $s$ drop drastically at the beginning and then decrease smoothly through the end in the Figures \ref{fig: convergence} (a)-(c); however, Figure \ref{fig: convergence}(d) displays a surge of $r$. Moreover, when $\rho=0.1$, $r$ is located above $s$ while when $\rho=10$ the situation is the opposite. Obviously, the multi-block ADMM can obtain a reasonable solution within tens of iterations as long as $\rho$ is small and hence it is suitable for large-scale optimization.
\vspace{-0.4cm}
\begin{figure}[h]
  \centering
  \begin{minipage}
  {0.24\linewidth}
  \centerline{\includegraphics[width=\textwidth] {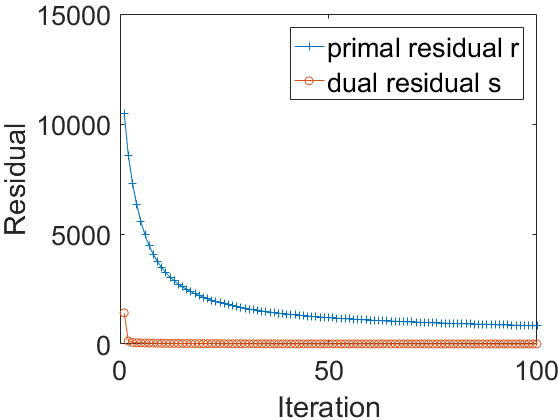}}
  \centerline{(a).$\rho=0.1$ on the}  \centerline{ smoothed isotonic} \centerline{regression problem.}
  \end{minipage}
\hfill
  \begin{minipage}
  {0.24\linewidth}
  \centerline{\includegraphics[width=\textwidth] {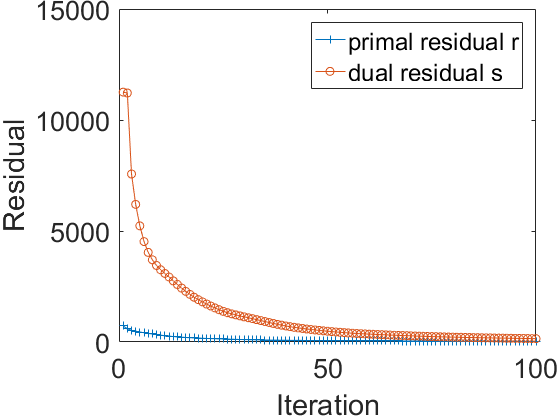}}
  \centerline{(b).$\rho=10$ on the}  \centerline{ smoothed isotonic} \centerline{regression problem.}
  \end{minipage}
  \hfill
    \begin{minipage}
  {0.24\linewidth}
  \centerline{\includegraphics[width=\textwidth] {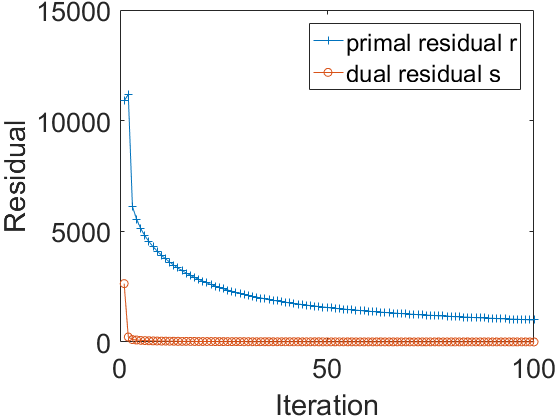}}
  \centerline{(c).$\rho=0.1$ on the} \centerline{multi-dimensional} \centerline{ ordering problem.}
  \end{minipage}
\hfill
  \begin{minipage}
  {0.24\linewidth}
  \centerline{\includegraphics[width=\textwidth] {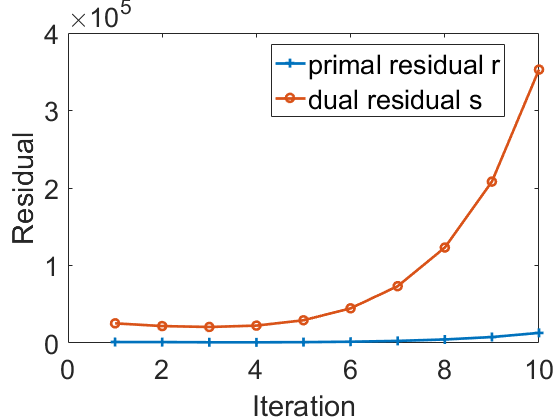}}
  \centerline{(c).$\rho=10$ on the} \centerline{multi-dimensional} \centerline{ ordering problem.}
  \end{minipage}
  \caption{Convergence of the multi-block ADMM when $n=1000$ on two problems.}
  \vspace{-0.5cm}
  \label{fig: convergence}
\end{figure}
\\
\indent\textbf{2. Does the multi-block ADMM scale well?}
As Figure \ref{fig: scalability} shown, the running time of the multi-block ADMM increases linearly with the number of observations on two problems. In Figure \ref{fig: scalability}(a), the SPAV is more efficient than the multi-block ADMM when the number of observations $n$ is less than 20,000 but needs more time since then; as for Figure \ref{fig: scalability}(b), the multi-block ADMM is more efficient than the IPM no matter how many observations there are.
\vspace{-0.4cm}
\small
\begin{figure}[h]
  \centering
  \begin{minipage}
  {0.35\linewidth}
  \centerline{\includegraphics[width=\textwidth] {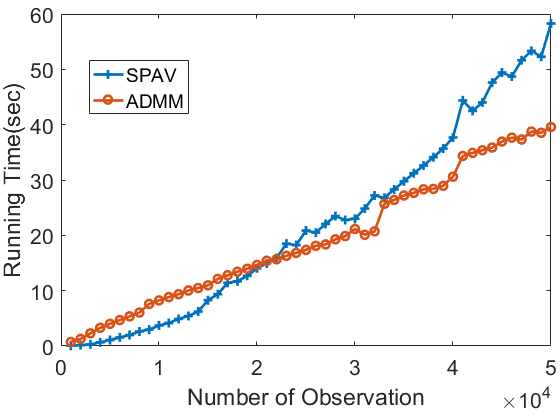}}
  \centerline{(a).Scalability on the smoothed}
  \centerline{isotonic regression problem.}
  \end{minipage}
\hfill
  \begin{minipage}
  {0.35\linewidth}
  \centerline{\includegraphics[width=\textwidth] {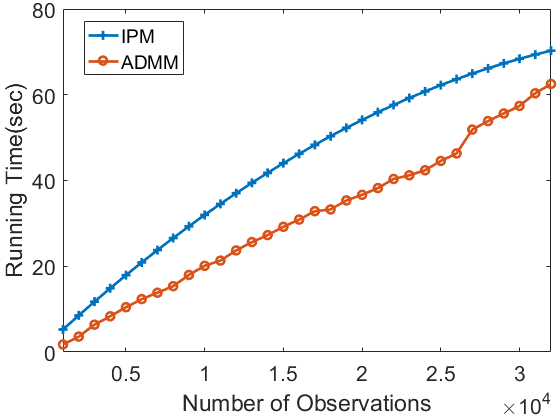}}
  \centerline{(b).Scalability on the} \centerline{multi-dimensional ordering problem.}
  \end{minipage}
 
  \caption{Scalability of the multi-block ADMM on two problems.}
    \label{fig: scalability}
  \end{figure}
  \normalsize
  \vspace{-1cm}
\section{Conclusion}
\vspace{-0.3cm}
\indent The multi-block ADMM is an interesting topic in the optimization community in recent years. In this paper, we apply the multi-block ADMM to two problems related to isotonic regression: smoothed isotonic regression and multi-dimensional ordering. Most existing methods are not efficient enough to run on large-scale datasets. However, the main advantage of the multi-block ADMM is parallel computing and hence it is scalable to large datasets. We find that the multi-block ADMM  converges when $\rho$ is small and its running time increases linearly with the scalability of observations.
\label{sec:conclusion}
\bibliographystyle{plain}
\bibliography{nips_2019}

\begin{thebibliography}{25}
\providecommand{\natexlab}[1]{#1}
\providecommand{\url}[1]{\texttt{#1}}
\expandafter\ifx\csname urlstyle\endcsname\relax
  \providecommand{\doi}[1]{doi: #1}\else
  \providecommand{\doi}{doi: \begingroup \urlstyle{rm}\Url}\fi

\bibitem[Barlow and Brunk(1972)]{barlow1972isotonic}
RE~Barlow and HD~Brunk.
\newblock The isotonic regression problem and its dual.
\newblock \emph{Journal of the American Statistical Association}, 67\penalty0
  (337):\penalty0 140--147, 1972.

\bibitem[Best and Chakravarti(1990)]{best1990active}
Michael~J Best and Nilotpal Chakravarti.
\newblock Active set algorithms for isotonic regression; a unifying framework.
\newblock \emph{Mathematical Programming}, 47\penalty0 (1-3):\penalty0
  425--439, 1990.

\bibitem[Boyd et~al.(2011)Boyd, Parikh, Chu, Peleato, and
  Eckstein]{boyd2011distributed}
Stephen Boyd, Neal Parikh, Eric Chu, Borja Peleato, and Jonathan Eckstein.
\newblock Distributed optimization and statistical learning via the alternating
  direction method of multipliers.
\newblock \emph{Foundations and Trends{\textregistered} in Machine Learning},
  3\penalty0 (1):\penalty0 1--122, 2011.

\bibitem[Chartrand and Wohlberg(2013)]{chartrand2013nonconvex}
Rick Chartrand and Brendt Wohlberg.
\newblock A nonconvex admm algorithm for group sparsity with sparse groups.
\newblock In \emph{Acoustics, Speech and Signal Processing (ICASSP), 2013 IEEE
  International Conference on}, pages 6009--6013. IEEE, 2013.

\bibitem[Chen et~al.(2016)Chen, He, Ye, and Yuan]{chen2016direct}
Caihua Chen, Bingsheng He, Yinyu Ye, and Xiaoming Yuan.
\newblock The direct extension of admm for multi-block convex minimization
  problems is not necessarily convergent.
\newblock \emph{Mathematical Programming}, 155\penalty0 (1-2):\penalty0 57--79,
  2016.

\bibitem[Deng et~al.(2017)Deng, Lai, Peng, and Yin]{deng2017parallel}
Wei Deng, Ming-Jun Lai, Zhimin Peng, and Wotao Yin.
\newblock Parallel multi-block admm with o (1/k) convergence.
\newblock \emph{Journal of Scientific Computing}, 71\penalty0 (2):\penalty0
  712--736, 2017.

\bibitem[He and Yuan(2014)]{he2014direct}
Bingsheng He and Xiaoming Yuan.
\newblock On the direct extension of admm for multi-block separable convex
  programming and beyond: from variational inequality perspective.
\newblock \emph{Manuscript, http://www. optimizationonline.
  org/DB\_HTML/2014/03/4293. html}, 2014.

\bibitem[Kalai and Sastry(2009)]{kalai2009isotron}
Adam~Tauman Kalai and Ravi Sastry.
\newblock The isotron algorithm: High-dimensional isotonic regression.
\newblock In \emph{COLT}. Citeseer, 2009.

\bibitem[Kyng et~al.(2015)Kyng, Rao, and Sachdeva]{kyng2015fast}
Rasmus Kyng, Anup Rao, and Sushant Sachdeva.
\newblock Fast, provable algorithms for isotonic regression in all l\_p-norms.
\newblock In \emph{Advances in Neural Information Processing Systems}, pages
  2719--2727, 2015.

\bibitem[Lin et~al.(2014)Lin, Ma, and Zhang]{lin2014convergence}
Tianyi Lin, Shiqian Ma, and Shuzhong Zhang.
\newblock On the convergence rate of multi-block admm.
\newblock \emph{arXiv preprint arXiv:1408.4265}, 229, 2014.

\bibitem[Lin et~al.(2015)Lin, Ma, and Zhang]{lin2015global}
Tianyi Lin, Shiqian Ma, and Shuzhong Zhang.
\newblock Global convergence of unmodified 3-block admm for a class of convex
  minimization problems.
\newblock \emph{arXiv preprint arXiv:1505.04252}, 2015.

\bibitem[Lin et~al.(2016)Lin, Ma, and Zhang]{lin2016iteration}
Tianyi Lin, Shiqian Ma, and Shuzhong Zhang.
\newblock Iteration complexity analysis of multi-block admm for a family of
  convex minimization without strong convexity.
\newblock \emph{Journal of Scientific Computing}, 69\penalty0 (1):\penalty0
  52--81, 2016.

\bibitem[Magn{\'u}sson et~al.(2016)Magn{\'u}sson, Weeraddana, Rabbat, and
  Fischione]{magnusson2016convergence}
Sindri Magn{\'u}sson, Pradeep~Chathuranga Weeraddana, Michael~G Rabbat, and
  Carlo Fischione.
\newblock On the convergence of alternating direction lagrangian methods for
  nonconvex structured optimization problems.
\newblock \emph{IEEE Transactions on Control of Network Systems}, 3\penalty0
  (3):\penalty0 296--309, 2016.

\bibitem[Moon et~al.(2010)Moon, Smola, Chang, and Zheng]{moon2010intervalrank}
Taesup Moon, Alex Smola, Yi~Chang, and Zhaohui Zheng.
\newblock Intervalrank: isotonic regression with listwise and pairwise
  constraints.
\newblock In \emph{Proceedings of the third ACM international conference on Web
  search and data mining}, pages 151--160. ACM, 2010.

\bibitem[Robinson and Tappenden(2017)]{robinson2017flexible}
Daniel~P Robinson and Rachael Tappenden.
\newblock A flexible admm algorithm for big data applications.
\newblock \emph{Journal of Scientific Computing}, 71\penalty0 (1):\penalty0
  435--467, 2017.

\bibitem[Stout(2013)]{stout2013isotonic}
Quentin~F Stout.
\newblock Isotonic regression via partitioning.
\newblock \emph{Algorithmica}, 66\penalty0 (1):\penalty0 93--112, 2013.

\bibitem[Stout(2015)]{stout2015isotonic}
Quentin~F Stout.
\newblock Isotonic regression for multiple independent variables.
\newblock \emph{Algorithmica}, 71\penalty0 (2):\penalty0 450--470, 2015.

\bibitem[Sysoev and Burdakov(2018)]{sysoev2018smoothed}
Oleg Sysoev and Oleg Burdakov.
\newblock A smoothed monotonic regression via l2 regularization.
\newblock \emph{Knowledge and Information Systems}, pages 1--22, 2018.

\bibitem[Tao and Yuan(2016)]{tao2016convergence}
Min Tao and Xiaoming Yuan.
\newblock Convergence analysis of the direct extension of admm for
  multiple-block separable convex minimization.
\newblock \emph{Advances in Computational Mathematics}, pages 1--41, 2016.

\bibitem[Wang et~al.(2014)Wang, Xu, and Xu]{wang2014convergence}
Fenghui Wang, Zongben Xu, and Hong-Kun Xu.
\newblock Convergence of bregman alternating direction method with multipliers
  for nonconvex composite problems.
\newblock \emph{arXiv preprint arXiv:1410.8625}, 2014.

\bibitem[Wang and Zhao(2018)]{wang2018multi}
Junxiang Wang and Liang Zhao.
\newblock Multi-instance domain adaptation for vaccine adverse event detection.
\newblock In \emph{Proceedings of the 2018 World Wide Web Conference on World
  Wide Web}, pages 97--106. International World Wide Web Conferences Steering
  Committee, 2018.

\bibitem[Wang et~al.(2018)Wang, Zhao, and Ye]{wang2018semi}
Junxiang Wang, Liang Zhao, and Yanfang Ye.
\newblock Semi-supervised multi-instance interpretable models for flu shot
  adverse event detection.
\newblock In \emph{2018 IEEE International Conference on Big Data (Big Data)},
  pages 851--860. IEEE, 2018.

\bibitem[Wang et~al.(2019)Wang, Yu, Chen, and
  Zhao]{Wang:2019:AED:3292500.3330936}
Junxiang Wang, Fuxun Yu, Xiang Chen, and Liang Zhao.
\newblock Admm for efficient deep learning with global convergence.
\newblock In \emph{Proceedings of the 25th ACM SIGKDD International Conference
  on Knowledge Discovery \& Data Mining}, KDD '19, pages 111--119, New York,
  NY, USA, 2019. ACM.
\newblock ISBN 978-1-4503-6201-6.
\newblock \doi{10.1145/3292500.3330936}.
\newblock URL \url{http://doi.acm.org/10.1145/3292500.3330936}.

\bibitem[Wang et~al.(2015)Wang, Yin, and Zeng]{wang2015global}
Yu~Wang, Wotao Yin, and Jinshan Zeng.
\newblock Global convergence of admm in nonconvex nonsmooth optimization.
\newblock \emph{arXiv preprint arXiv:1511.06324}, 2015.

\bibitem[Wen et~al.(2012)Wen, Yang, Liu, and Marchesini]{wen2012alternating}
Zaiwen Wen, Chao Yang, Xin Liu, and Stefano Marchesini.
\newblock Alternating direction methods for classical and ptychographic phase
  retrieval.
\newblock \emph{Inverse Problems}, 28\penalty0 (11):\penalty0 115010, 2012.

\end{thebibliography}
\newpage
\textbf{\large Appendix}
\begin{appendix}
\section{The Convergence of Algorithms \ref{algo:algorithm 1} and \ref{algo:algorithm 2}}
\label{sec:convergence}
\indent In addition to the advantage of parallel computing, the convergence of the  Algorithm \ref{algo:algorithm 1} can be guaranteed by the following theorem:
\begin{theorem}
There exists $d>0$ such that if $\rho<d$, the Algorithm \ref{algo:algorithm 1} converges globally to the optimal point with sublinear convergence rate.
\end{theorem}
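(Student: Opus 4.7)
The proof strategy follows the standard framework for analyzing three-block ADMM with a small penalty parameter: construct a Lyapunov (potential) function built from the augmented Lagrangian plus weighted successive-iterate differences, show that it decreases by a multiple of the squared iterate changes at each step provided $\rho$ is below a threshold $d$, and then read off both convergence and the $O(1/k)$ rate. The plan is to exploit the fact that every subproblem in Algorithm 1 is strongly convex: the $u$-subproblem has modulus $\rho+2\lambda$, each $p_i$-subproblem has modulus at least $2w_1$ or $w_i+2\rho$, and each $q_i$-subproblem has modulus at least $w_{i+1}+2\rho$ or $2w_n$. These moduli give quadratic growth inequalities around each block minimizer, which I will use repeatedly.

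Step 1: Write the first-order optimality conditions of the three subproblems and use them, together with the dual updates $y_1^{k+1}=y_1^k+\rho r_1^{k+1}$ and $y_2^{k+1}=y_2^k+\rho r_2^{k+1}$, to bound the dual-residual increments $\|y_j^{k+1}-y_j^k\|$ by linear combinations of $\|p^{k+1}-p^k\|$ and $\|q^{k+1}-q^k\|$. Because the coupling matrices $A_i$ in the reformulation consist of $\pm 1$ entries with at most two nonzeros per column, the resulting constants depend only on $\max_i w_i$ and are independent of $n$. This is the key estimate that lets the dual change be controlled by the primal change.

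Step 2: Define a potential of the form
$$\Phi^k \;=\; L_\rho(u^k,p^k,q^k,y_1^k,y_2^k) + c_1\|p^k-p^{k-1}\|^2 + c_2\|q^k-q^{k-1}\|^2,$$
where $c_1,c_2>0$ are chosen as functions of $\rho$ and the $w_i$. Using the three block-descent inequalities from the strong convexity in Step 1 and the dual-increment bound, I would show
$$\Phi^k-\Phi^{k+1} \;\geq\; \alpha\bigl(\|u^{k+1}-u^k\|^2+\|p^{k+1}-p^k\|^2+\|q^{k+1}-q^k\|^2\bigr)$$
for some $\alpha>0$, the inequality being valid exactly when $\rho$ is smaller than a threshold $d$ determined by requiring the coefficient of each successive-iterate norm to be positive after the cross-terms from dual-residual bounds are absorbed. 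Lower-boundedness of $\Phi^k$ follows from the existence of a KKT pair (the problem is a convex QP with a Slater point) combined with the usual saddle-point inequality applied to $L_\rho$.

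Step 3: From the monotone descent and lower boundedness, the successive differences are square-summable, so the primal residuals $r_1^{k+1},r_2^{k+1}$ and the dual residuals $s_1^{k+1},s_2^{k+1},s_3^{k+1}$ all tend to zero. Boundedness of $(p^k,q^k,u^k,y_1^k,y_2^k)$ together with the vanishing residuals and closedness of the KKT map implies that every limit point is a KKT solution, and convexity then forces the entire sequence to converge to an optimal primal-dual pair. Summing the descent inequality over $k=0,\ldots,K-1$ and dividing by $K$ yields the standard ergodic bound $\min_{k\leq K}(\|r^{k+1}\|^2+\|s^{k+1}\|^2)=O(1/K)$, i.e.\ the claimed sublinear rate.

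The main obstacle is Step 2: identifying the right weights $c_1,c_2$ and the explicit threshold $d$. The three-block structure, combined with two separate linear-constraint groups ($p-q+u=0$ and $p_{i+1}=q_i$), produces cross terms whose sign is not automatic; they must be absorbed into the strong-convexity slack of the $p$- and $q$-updates. It is exactly this absorption that requires $\rho$ to be small, and pinning down a concrete (as opposed to merely existential) value of $d$ in terms of $\{w_i\}$ and $\lambda$ is the technically delicate part of the argument.
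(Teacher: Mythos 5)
Your route diverges from the paper's, which is a two-line verification-and-citation: the reformulated objective is strongly convex in the blocks $p$ and $q$ (with moduli bounded below by the weights $w_i>0$), so the hypotheses of \citep{lin2014convergence} are satisfied, and Theorems 3.2 and 4.2 there directly give global convergence with a sublinear rate once $\rho$ is below a threshold $d$. Attempting a self-contained proof instead would be legitimate, but your sketch has a genuine structural flaw, concentrated exactly in the Step 2 you yourself flag as delicate: the Lyapunov-descent mechanism you propose naturally produces a \emph{lower} bound on $\rho$, not the upper bound $\rho<d$ that the theorem asserts.

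Concretely, the dual ascent steps increase the augmented Lagrangian by $\langle y^{k+1}-y^k,\,r^{k+1}\rangle=\rho\|r^{k+1}\|_2^2=\|y^{k+1}-y^k\|_2^2/\rho$. Your Step 1 bound has the form $\|y^{k+1}-y^k\|\leq C\bigl(\|p^{k+1}-p^k\|+\|q^{k+1}-q^k\|\bigr)$, where $C$ comes from the Lipschitz constants of the quadratic objective divided by a least singular value of the constraint blocks; crucially, $C$ carries no factor of $\rho$. Absorbing the dual increase into the strong-convexity slack of the block updates (moduli of order $w_i+2\rho\approx w_i$, also $\rho$-independent) therefore requires roughly $C^2/\rho\lesssim w_{\min}$, i.e.\ $\rho$ \emph{large} --- the regime of descent-based (typically nonconvex) ADMM analyses, and the opposite of what is claimed. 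The small-$\rho$ result is reached by a different mechanism, the one in the work the paper cites: expand distances to a fixed KKT pair and let the strong-convexity terms $\sigma_i\|x_i-x_i^\ast\|_2^2$ of the objective itself dominate the penalty cross terms, which scale like $\rho\,\lambda_{\max}(A_i^TA_i)\|x_i-x_i^\ast\|_2^2$; positivity of the resulting coefficients is precisely what yields an upper threshold $d$ on $\rho$, and summing gives the ergodic $O(1/k)$ rate. Two secondary gaps: the $u$-subproblem's optimality condition is an inclusion (it involves the normal cone of $\{u\geq 0\}$), so it yields no equation for $y_1$, meaning your dual-increment bound must be extracted from the $p$- and $q$-blocks alone --- salvageable here because the stacked matrix $[A_p\ A_q]$ has full row rank, but the constant is then its least singular value, and your assertion that it is independent of $n$ is unsupported for this banded, path-graph-like structure and may in fact degrade with $n$.
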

\begin{proof}
\indent Because the objective function is strongly convex in $p$ and $q$, all assumptions in \citep{lin2014convergence} are  satisfied and hence the Theorem 3.2 and Theorem 4.2 in \citep{lin2014convergence} ensure the global sublinear convergence if $\rho$ is smaller than a threshold $d$.
\end{proof}
\indent As for the convergence properties of Algorithm \ref{algo:algorithm 2}, the below theorem guarantees the convergence if $\rho$ is small.
\begin{theorem}
There exists $d>0$ such that if $\rho<d$, then the Algorithm \ref{algo:algorithm 2} converges globally to the optimal point with sublinear convergence rate.
\end{theorem}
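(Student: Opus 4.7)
The plan is to mimic the proof of the preceding theorem for Algorithm \ref{algo:algorithm 1} by casting Algorithm \ref{algo:algorithm 2} into the template of \citep{lin2014convergence} and verifying the required assumptions. First I would rewrite the feasibility system $E_1g-E_2h+v=0,\ g=h$ as a single linear equality
\begin{equation*}
\begin{bmatrix} I \\ 0 \end{bmatrix} v + \begin{bmatrix} E_1 \\ I \end{bmatrix} g + \begin{bmatrix} -E_2 \\ -I \end{bmatrix} h = 0,
\end{equation*}
so that the updates in Equations \eqref{eq:Multi-dimensional update v}--\eqref{eq:Multi-dimensional update h} really are a three-block ADMM with the blocks $v, g, h$ and a standard augmented Lagrangian whose multiplier stack is $(y_1,y_2)$. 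The $v$-block's effective objective is the indicator of the nonnegative orthant (convex, proper, closed), while the $g$- and $h$-blocks carry the smooth quadratic terms $W^T((Y-\cdot)\odot(Y-\cdot))/2$.

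Next I would verify the two substantive hypotheses used in \citep{lin2014convergence}. Strong convexity of the objective in the last two blocks follows because $w_i>0$ for all $i$: the Hessian in $g$ is $\text{diag}(W)$ and similarly in $h$, both positive definite with modulus $\mu=\min_i w_i>0$. The remaining structural condition is that the coefficient matrices of the strongly convex blocks have full column rank, which is immediate here since $\begin{bmatrix} E_1 \\ I \end{bmatrix}$ and $\begin{bmatrix} -E_2 \\ -I \end{bmatrix}$ each contain an identity block and therefore have full column rank regardless of the graph $G$. The $v$-block coefficient $\begin{bmatrix} I \\ 0 \end{bmatrix}$ is also injective, and the indicator of $\{v\geq 0\}$ is a proper closed convex function, so every subproblem has a unique minimizer (as the closed-form updates \eqref{eq:Multi-dimensional update v}--\eqref{eq:Multi-dimensional update h} already confirm).

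With these conditions in hand I would invoke Theorem 3.2 and Theorem 4.2 of \citep{lin2014convergence}: there exists a threshold $d>0$, depending on $\mu=\min_i w_i$ and on the operator norms of $E_1,E_2$ and the identity blocks, such that whenever $\rho<d$ the iterates $(v^k,g^k,h^k,y_1^k,y_2^k)$ converge globally to a primal-dual optimum, and the ergodic residuals decay at the $O(1/k)$ sublinear rate. Because the feasible set is nonempty (take $g=h=$ the isotonic regression solution and $v=E_2 h-E_1 g$) and the problem is convex, any such limit point solves Problem \ref{prob: Multi-dimensional ordering}.

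The only delicate step is the dependence of the threshold $d$ on the graph-dependent quantities $\|E_1\|,\|E_2\|$: these can be dense for large $|E|$, so in principle $d$ can be small. This is precisely the qualitative behavior visible in Figure \ref{fig: convergence}, where $\rho=0.1$ converges but $\rho=10$ diverges. An explicit formula for $d$ is not needed to establish existence, which is all the statement claims; the main obstacle is simply bookkeeping to identify our $(\mu, \|E_1\|, \|E_2\|)$ with the constants named in \citep{lin2014convergence} so that their threshold translates directly to our setting.
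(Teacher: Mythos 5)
Your proposal is correct and takes essentially the same route as the paper: the paper's proof of this theorem is literally ``the same as Theorem 1,'' namely observing that the objective is strongly convex in the last two blocks ($g$ and $h$, with modulus $\min_i w_i>0$) and invoking Theorems 3.2 and 4.2 of \citep{lin2014convergence} to get global sublinear convergence whenever $\rho$ is below a threshold $d$. Your write-up merely supplies the details the paper leaves implicit --- the stacked three-block constraint formulation, the full-column-rank checks on the coefficient matrices, and the treatment of the non-strongly-convex first block $v$ --- all of which are consistent with the cited assumptions.
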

\begin{proof}
\indent The proof is the same as Theorem 1.
\end{proof}
\section{Related Work on the Multi-block ADMM}
\label{sec: related work}
\indent The multi-block ADMM was firstly studied by Chen et al. \citep{chen2016direct} when they concluded that the multi-block ADMM does not necessarily converge by giving a counterexample. He and Yuan explained why the multi-block ADMM may diverge from the perspective of variational inequality framework \citep{he2014direct}. Since then, many researchers studied sufficient conditions to ensure the global convergence of  the multi-block ADMM. For example, Robinson and Tappenden and Lin et al. imposed strong convexity assumption on the objective function $f_i(x_i)(i=1,\cdots,n)$ \citep{robinson2017flexible,lin2014convergence}. Tao and Yuan did not require all objective functions $f_i(x_i)(i=1,\cdots,n)$ to be strongly convex, but imposed full-rank assumption on $A_i(i=1,\cdots,n)$ \citep{tao2016convergence}. Lin et al. weakened the strong convexity assumption on the objective function $f_i(x_i)(i=1,\cdots,n)$ by imposing the additional cocercity assumption on $f_i(x_i)(i=1,\cdots,n)$ \citep{lin2016iteration}.  Deng et al. proved that the multi-block ADMM preserved convergence if multiple variables $x_i(i=1,\cdots,n)$ were updated in the Jacobian fashion rather than Gauss-Seidel fashion \citep{deng2017parallel}.  Lin et al. proved that the multi-block ADMM converged for any $\rho>0$ in the regularized least squares decomposition problem \citep{lin2015global}. Moreover, some work extended the multi-block ADMM into the nonconvex settings. See \citep{magnusson2016convergence,wang2015global,wang2014convergence} for more information.
\section{Data Generation and Parameter Settings}
\label{sec:data generation}
The experimental data and parameters for two applications are explained in this section. Commonly, the maximal number of iteration was set to $10,000$. We set the tolerance $\varepsilon=0.01\sqrt{n}$ according to the recommendation by Boyd et al. \citep{boyd2011distributed}. For the smoothed isotonic regression problem, we generated simulated observations $x_i(i=1,\cdots,n)$ from a uniform distribution in $(0,1000)$; we set $w_i=1(i=1,\cdots,n)$ and $\lambda=1$. For the multi-dimensional ordering problem, we generated simulated observations $Y_i(i=1,\cdots,n)$ from a uniform distribution in $(0,1000)$. $w_i(i=1,\cdots,n)$ were set to 1. $E_1$ and $E_2$ were generated from a  2-dimensional random grid graph to simulate the ordering of the 2-dimensional space.
\section{Baselines}
\label{sec: baselines}
\indent In order to test the scalability of the multi-block ADMM on two applications, two baselines were utilized for comparison: \\
\textit{1. Smoothed Pool-Adjacent-Violators(SPAV) \citep{sysoev2018smoothed}.} The SPAV algorithm is a extension of the PAV algorithm designed for the smoothed isotonic regression problem. It partitions all $\beta_i(i=1,\cdots,n)$ into many blocks according to the feasibility of inequality constraints. The iteration ends when all constraints are feasible.\\
\textit{2. Interior Point Method(IPM) \citep{kyng2015fast}.} The IPM method is an efficient algorithm to solve multi-dimensional ordering  with convergence guarantee. Its time complexity is $O(\vert E\vert^{1.5}\log^2\vert V\vert\log^2(\vert V\vert/\delta))$ where $\delta$ is a tolerance.
\end{appendix}

\end{document}